\newtheorem{theorem}{Theorem}[section]
\newtheorem{lemma}[theorem]{Lemma}
\newtheorem{proposition}[theorem]{Proposition}
\newtheorem*{Theorem1}{Theorem 1}
\newtheorem*{Theorem2}{Theorem 2}
\newtheorem*{Theorem1'}{Theorem 1'}
\theoremstyle{definition}
\theoremstyle{remark}
\numberwithin{equation}{section}
\newcommand \g{{\mathfrak g}}
\newcommand  \s{{\mathfrak s}}
\renewcommand \r{{\mathfrak r}}
\def \n{{\mathfrak n}}
\renewcommand \t{{\mathfrak t}}
\newcommand \D{{\mathcal C}}
\newcommand \gl{{\mathfrak {gl}}}
\newcommand \B{{\mathcal B}}
\newcommand \GL{{\mathrm {GL}}}
\begin{document}

\title{Jordan-Chevalley decomposition\\ in finite dimesional Lie algebras}

\author{Leandro Cagliero}
\address{CIEM-CONICET, FAMAF-Universidad Nacional de C\'ordoba, C\'ordoba, Argentina.}
\curraddr{Math. Department, MIT, Cambridge, MA 02139-4307, USA.}
\email{cagliero@famaf.unc.edu.ar}
\thanks{The first author was supported in part by CONICET and SECYT-UNC grants.}

\author{Fernando Szechtman}
\address{Department of Mathematics and Statistics, Univeristy of Regina, Canada}
\email{fernando.szechtman@gmail.com}
\thanks{The second author was supported in part by an NSERC discovery grant}

\subjclass[2000]{Primary 17B05, 17B10; Secondary 15A21}



\keywords{Perfect and semisimple Lie algebras, Jordan-Chevalley
decomposition, representations}

\begin{abstract}
Let $\g$ be a finite dimensional Lie algebra over
a field $k$ of characteristic zero.
An element $x$ of $\g$ is said to have an \emph{abstract Jordan-Chevalley decomposition}
if there exist unique $s,n\in\g$ such that $x=s+n$, $[s,n]=0$ and given any
finite dimensional representation $\pi:\g\to\gl(V)$
the Jordan-Chevalley decomposition of $\pi(x)$ in $\gl(V)$ is $\pi(x)=\pi(s)+\pi(n)$.

In this paper we prove that $x\in\g$ has an abstract Jordan-Chevalley decomposition
if and only if $x\in [\g,\g]$, in which case its semisimple and nilpotent parts are
also in $[\g,\g]$ and are explicitly determined. We derive two immediate consequences:
(1) every element of $\g$ has an abstract Jordan-Chevalley decomposition if and only if
$\g$ is perfect; (2) if $\g$ is a Lie subalgebra of $\gl(n,k)$ then
$[\g,\g]$ contains the semisimple and nilpotent parts of
all its elements. The last result was first proved by Bourbaki using different methods.

Our proof only uses
elementary linear algebra and basic results on the representation theory
of Lie algebras, such as the Invariance Lemma and Lie's Theorem,
in addition to the fundamental theorems of Ado and Levi.
\end{abstract}

\maketitle

\section{Introduction}
Let $k$ be a field of characteristic zero.
All Lie algebras and representations are assumed to be finite dimensional.
An element $x$ of a Lie algebra $\g$ is said to have an \emph{abstract Jordan-Chevalley decomposition}
if there exist unique $s,n\in\g$ such that $x=s+n$, $[s,n]=0$ and
this is compatible with every representation of $\g$, in the sense that
given any finite dimensional representation $\pi:\g\to\gl(V)$
the Jordan-Chevalley decomposition of $\pi(x)$ in $\gl(V)$ is $\pi(x)=\pi(s)+\pi(n)$.
The Lie algebra $\g$ itself is said to have an abstract Jordan-Chevalley decomposition
if everyone of its elements does. As is well-known all semisimple Lie algebras possess
this property. A proof of this fact can be found in any standard book on Lie algebras
(see for instance\cite{Bo1}, \cite{FH} or \cite{Hu}).
As a consequence, if $\pi:\g\to\gl(V)$ is a representation of a
semisimple Lie algebra and $x\in\g$,
the semisimple and nilpotent parts of $\pi(x)$ in $\gl(V)$
actually belong to $\pi(\g)$.

More generally, if $\g\subset\gl(V)$ is a Lie subalgebra of $\gl(V)$ then,
according to Bourbaki (Ch.\ VII, \S5 in \cite{Bo7}),
$\g$ is said to be \emph{decomposable} if for any $x\in\g$
the semisimple and nilpotent parts of $x$ in $\gl(V)$
belong to $\g$.
In particular, every semisimple Lie subalgebra of $\gl(V)$
is decomposable. It is also a classical result that
the Lie algebra of any algebraic subgroup of $\text{GL}(V)$
is decomposable, see for instance Ch.\ I \S4 in \cite{Bo}.
Moreover, Corollary 2 of Ch.\ VII, \S5.5 in \cite{Bo7} states that $[\g,\g]$ is
decomposable for any  Lie subalgebra $\g$ of $\gl(V)$.
In particular, perfect Lie  subalgebras of $\gl(V)$ are decomposable.
It must be pointed out, however, that given two
decomposable Lie subalgebras $\g_i\subset\gl(V_i)$, $i=1,2$, and a
homomorphism $f:\g_1\to\g_2$,
it is not necessarily true that $f$ transforms
semisimple (resp. nilpotent) elements of $\g_1$ to
semisimple (resp. nilpotent) elements of $\g_2$
(a counterexample already appears with $\dim\g_i=1$, $i=1,2$).
This applies in particular to representations of decomposable Lie subalgebras.

This leads naturally to the question of what is the class of Lie algebras
$\g$ that have an abstract Jordan-Chevalley
decomposition.
The answer to this question is given by the following theorem.

\begin{Theorem1}
A Lie algebra $\g$ has an abstract Jordan-Chevalley
decomposition if and only if $\g$ is perfect.
\end{Theorem1}

Although this result is suggested by the statement of
Corollary 2 of Ch.\ VII, \S5.5 in \cite{Bo7}, we could not find an explicit
reference to it in the literature. Nor it is clear that this theorem is
a direct consequence of the results established in
Ch.\ VII, \S5 in \cite{Bo7}.

Theorem 1 as well as Bourbaki's Corollary 2 of Ch.\ VII, \S5.5 in \cite{Bo7}
follow at once from Theorem 2 below. Moreover, Theorem 2 determines
what specific elements of an arbitrary Lie algebra $\g$ admit an abstract Jordan-Chevalley
decomposition, namely those in $[\g,\g]$.
Furthermore, the semisimple and nilpotent parts of any $x\in [\g,\g]$ are shown
 to lie in $[\g,\g]$ as well, and we explicitly indicate how to obtain them.

\begin{Theorem2}
An element $x$ of a Lie algebra $\g$ has an abstract Jordan-Chevalley
decomposition
if and only if $x$ belongs to the derived algebra $[\g,\g]$, in which case
the semisimple and nilpotent parts of $x$ also belong to $[\g,\g]$.
\end{Theorem2}

The proof of Theorem 2 only uses
elementary linear algebra and basic results on representation theory
of Lie algebras, such as the Invariance Lemma and Lie's Theorem
in addition to the fundamental theorems of Ado and Levi.

If $k$ is a perfect field of
positive characteristic, any restricted Lie algebra
admits such an abstract decomposition, known as the
Jordan-Chevalley-Seligman decomposition (see~\cite{Se}).
However, the above theorems are false in general and in fact they are already false for
simple Lie algebras as the following example shows.
Let $k$ be the (perfect) field with 2 elements and let
$\g$ be the 3 dimensional Lie algebra with basis $i,j,k$ satisfying $[i,j]=k$, $[j,k]=i$ and
$[k,i]=j$. Clearly $\g$ is perfect and, being 3 dimensional, it is simple.
However, the semisimple and nilpotent parts of $ad_\g(i)$ in $\gl(\g)$ are easily
seen not to be in $ad(\g)$.

We thank A. Pianzola for pointing out the work of Bourbaki on decomposable Lie algebras,
and J. Vargas for useful discussions on the paper.

\section{Basic preliminaries}
In this section we collect in four lemmas some basic facts that will be used in next section. Here $k$ denotes an arbitrary field.

\medskip

\begin{lemma}\label{Lemma 1.}
 Let $A$ be an upper triangular $n\times n$ matrix over $k$. Then $A$ is diagonalizable
if and only if $A_{ij}=0$ whenever $i<j$ and $A_{ii}=A_{jj}$.
\end{lemma}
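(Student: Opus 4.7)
\emph{Plan.} I would prove both implications simultaneously by induction on $n$, the case $n=1$ being trivial. For the inductive step, decompose $A$ as
\[
A = \begin{pmatrix} \lambda & v^T \\ 0 & A' \end{pmatrix},
\]
where $\lambda = A_{11}$, $v = (A_{12},\ldots,A_{1n})^T$, and $A'$ is the lower-right $(n-1)\times(n-1)$ principal submatrix. The matrix $A'$ is upper triangular and inherits the hypothesis of the lemma (simply restricted to indices $\geq 2$), so the inductive hypothesis applies to $A'$.

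For the direction $(\Leftarrow)$, assume the vanishing condition. The induction produces an eigenbasis $w_1,\ldots,w_{n-1}$ of $A'$ with eigenvalues $\mu_1,\ldots,\mu_{n-1}$. I would extend each $w_k$ to a vector $(c_k, w_k^T)^T \in k^n$ and ask for it to be an eigenvector of $A$ with eigenvalue $\mu_k$; the eigenvalue equation collapses to the scalar relation $(\lambda-\mu_k)c_k = -\,v\cdot w_k$. When $\mu_k\ne\lambda$ this determines $c_k$ uniquely. When $\mu_k=\lambda$ the equation becomes $v\cdot w_k=0$, and this is where the hypothesis enters: one chooses the eigenbasis of $A'$ so that its $\lambda$-eigenvectors lie in the coordinate subspace $\mathrm{span}\{e_j : A_{jj}=\lambda\}$, which by hypothesis is exactly the subspace on which $v$ vanishes. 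Adjoining $e_1$, always a $\lambda$-eigenvector, yields an eigenbasis of $A$.

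For the direction $(\Rightarrow)$, assume $A$ is diagonalizable. I would first show that $A$ is conjugate to a diagonal matrix $D$ via an invertible upper triangular matrix $P$, which is possible because the eigenvectors of an upper triangular diagonalizable matrix can be chosen compatibly with the standard flag $V_k = \mathrm{span}(e_1,\ldots,e_k)$. Writing out the identity $AP = PD$ and comparing entries above the diagonal then yields recursive equations on the $A_{ij}$ with $i<j$ which, whenever $A_{ii}=A_{jj}$, force $A_{ij}=0$.

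The most delicate step is the alignment in the $(\Leftarrow)$ direction: arranging the inductively supplied eigenbasis of $A'$ so that its $\lambda$-eigenvectors are supported on the coordinate positions where the diagonal of $A$ equals $\lambda$. The hypothesis controls the entries of $v$ only via their position, so the argument must rely on the inductive structure of $A'$ to translate this positional vanishing into genuine orthogonality with the $\lambda$-eigenspace; this is the heart of the proof.
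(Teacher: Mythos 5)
Your induction is set up sensibly, but the step you yourself single out as ``the heart of the proof'' is not merely delicate --- it is false, and no argument can supply it, because the lemma as printed fails in both directions. Consider
\[
A=\begin{pmatrix}1&1&0\\0&2&1\\0&0&1\end{pmatrix},
\qquad
B=\begin{pmatrix}1&1&1\\0&2&1\\0&0&1\end{pmatrix}.
\]
For $A$ the only pair $i<j$ with $A_{ii}=A_{jj}$ is $(1,3)$, and $A_{13}=0$, so the vanishing condition holds; yet $A-I$ has rank $2$, so the eigenvalue $1$ has geometric multiplicity $1$ against algebraic multiplicity $2$ and $A$ is not diagonalizable. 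In your notation $\lambda=1$, $v=(1,0)^T$, and $A'=\left(\begin{smallmatrix}2&1\\0&1\end{smallmatrix}\right)$ is diagonalizable, but its $1$-eigenspace is spanned by $e_2-e_3$, which neither lies in $\mathrm{span}\{e_j: A_{jj}=1,\ j\geq 2\}=\mathrm{span}\{e_3\}$ nor is annihilated by $v$: the alignment you ask for is impossible, exactly as the non-diagonalizability of $A$ demands. Your $(\Rightarrow)$ sketch is equally unfixable: $B$ has minimal polynomial $(x-1)(x-2)$, hence is diagonalizable, although $B_{13}\neq 0$ while $B_{11}=B_{33}$; so the equations coming from $BP=PD$ with $P$ upper triangular cannot ``force'' the vanishing you claim.

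For what it is worth, the paper's own one-paragraph proof stumbles at the same spot: the criterion ``algebraic equals geometric multiplicity for each diagonal entry'' is fine, but the closing assertion that this ``is equivalent to $A_{ij}=0$ for all $i<j$ such that $A_{ii}=A_{jj}$'' is refuted by the matrix $A$ above. The statement --- and your induction, essentially verbatim --- becomes correct if one adds the hypothesis that equal diagonal entries occupy consecutive positions, so that the condition says each diagonal block attached to a single eigenvalue is scalar: in the inductive step the $\lambda$-eigenspace of $A'$ then automatically lies in $\mathrm{span}\{e_j: A_{jj}=\lambda\}$, because the complementary diagonal block of $A'-\lambda I$ is upper triangular with nonzero diagonal, hence invertible, while the hypothesis kills the corresponding entries of $v$; the converse direction also goes through blockwise. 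That corrected block version (or the extra structure actually present where the lemma is invoked later, e.g.\ the relation $(S_{ii}-S_{jj})B_{ij}=\alpha B_{ij}$ for all $i<j$ with a fixed $\alpha\neq 0$ in Proposition \ref{Proposition 2.}) is what you should aim to prove; as it stands, your proposal has a genuine gap that coincides with a genuine error in the statement itself.
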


\begin{proof}
  $A$ is diagonalizable if and only if the algebraic and geometric
multiplicities of $\alpha$
are the same for every diagonal entry $\alpha$ of $A$.
This means that the leading 1's of the reduced row echelon
form of $A-\alpha I$ all occur in positions $(m,m)$ where
$A_{mm}\neq \alpha$, which is equivalent to $A_{ij}=0$ for all $i<j$ such that $A_{ii}=A_{jj}$.
\end{proof}

\begin{lemma}\label{Lemma 2.}
 Let $A$ be a diagonalizable upper triangular $n\times n$ matrix over $k$.
Then exists $P\in\GL_n(k)$ such that $P$ is upper triangular and $P^{-1}AP$ is diagonal.
\end{lemma}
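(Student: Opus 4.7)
The plan is to proceed by induction on $n$. The base case $n=1$ is trivial with $P=I$. For the inductive step I would write $A$ in block form
\[
A=\begin{pmatrix} \alpha & v\\ 0 & A'\end{pmatrix},
\]
where $\alpha=A_{11}$, $v$ is a row vector of length $n-1$, and $A'$ is the $(n-1)\times(n-1)$ lower-right block. The block $A'$ inherits upper triangularity, and its entries satisfy the criterion of Lemma~\ref{Lemma 1.} because those are entries of $A$; hence $A'$ is itself diagonalizable. The inductive hypothesis supplies an upper triangular $Q\in\GL_{n-1}(k)$ with $D':=Q^{-1}A'Q$ diagonal.

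Setting $P_1=\bigl(\begin{smallmatrix} 1 & 0\\ 0 & Q\end{smallmatrix}\bigr)$ yields
\[
P_1^{-1}AP_1=\begin{pmatrix}\alpha & w\\ 0 & D'\end{pmatrix},\qquad w:=vQ.
\]
This matrix is still diagonalizable, so Lemma~\ref{Lemma 1.} forces $w_j=0$ whenever $\alpha=D'_{jj}$. I would then clear the first row with a unipotent upper triangular conjugator $P_2=\bigl(\begin{smallmatrix} 1 & e\\ 0 & I\end{smallmatrix}\bigr)$. A direct block computation gives
\[
P_2^{-1}\begin{pmatrix}\alpha & w\\ 0 & D'\end{pmatrix}P_2=\begin{pmatrix}\alpha & \alpha e+w-eD'\\ 0 & D'\end{pmatrix},
\]
so I need $e_j(\alpha-D'_{jj})=-w_j$ for each $j\geq 2$. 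When $\alpha\neq D'_{jj}$ I set $e_j=w_j/(D'_{jj}-\alpha)$; when $\alpha=D'_{jj}$, the vanishing $w_j=0$ just established makes any choice work, so I take $e_j=0$. The product $P=P_1P_2$ is upper triangular and invertible, and $P^{-1}AP$ is diagonal.

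The only delicate point, and the place where the hypothesis of diagonalizability is genuinely used, is the compatibility condition for solving $e_j(\alpha-D'_{jj})=-w_j$ in the degenerate case $\alpha=D'_{jj}$. This is exactly what Lemma~\ref{Lemma 1.} provides, so no further obstacle arises; everything else is an elementary block manipulation.
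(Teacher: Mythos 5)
Your proof is correct and is essentially the same argument as the paper's: induction on $n$ combined with conjugation by upper triangular unipotent matrices to clear the first row, with Lemma~\ref{Lemma 1.} supplying exactly the vanishing/nonvanishing needed to solve for the conjugator. The only difference is cosmetic — the paper clears the first row entry by entry before passing to the lower block, whereas you diagonalize the lower block first and then clear the whole first row in one step — and your version is, if anything, written out more carefully.
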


\begin{proof}
 Replace $A$ by $P^{-1}AP$, where $P=I+\alpha E^{12}$, $\alpha=0$ if $A_{12}=0$,
$\alpha=A_{12}/(A_{22}-A_{11})$ if $A_{12}\neq 0$ (a valid choice by Lemma \ref{Lemma 1.}).
Repeat until $A_{12}=\cdots=A_{1n}=0$ and then reason by induction.
\end{proof}

\begin{lemma}\label{Lemma 3.}
 Let $S:V\to V$ be a diagonalizable endomorphism and let $\B$ be a basis of $V$
such that $A=[S]_\B$ is upper triangular. Let $V_0$ be the 0-eigenspace of $S$
and let $V_*$ be the sum of all other eigenspaces. Let $P:V\to V$ be the projection
with image $V_*$ and kernel $V_0$.
Let $T:V\to V$ be an endomorphism such that $[T]_\B$ is strictly upper triangular.
Then $P(S+T)\vert_{V_*}:V_*\to V_*$
is invertible.
\end{lemma}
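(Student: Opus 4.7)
The natural strategy is to reduce to the case in which $S$ is diagonal. Apply Lemma \ref{Lemma 2.} to produce an invertible upper triangular matrix $Q$ such that $Q^{-1}[S]_\B Q$ is diagonal; interpreting $Q$ as a change-of-basis matrix yields a new basis $\B'$ of $V$ in which $S$ is represented by a diagonal matrix. The only nonroutine observation is that conjugation by an invertible upper triangular matrix preserves strict upper triangularity (the diagonal of a product of upper triangular matrices is the entrywise product of the diagonals, which is $0$ as soon as one factor has zero diagonal), so $[T]_{\B'}$ remains strictly upper triangular while $[S]_{\B'}$ becomes diagonal.

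Once in $\B' = \{v_1, \ldots, v_n\}$, the subspaces $V_0$ and $V_*$ become transparent. Let $\lambda_i$ denote the $i$-th diagonal entry of $[S]_{\B'}$ and set $I_* = \{i : \lambda_i \neq 0\}$, $I_0 = \{i : \lambda_i = 0\}$. Because $S$ is diagonal in this basis, $V_* = \mathrm{span}\{v_i : i \in I_*\}$ and $V_0 = \mathrm{span}\{v_i : i \in I_0\}$, so the projection $P$ acts by $P(v_i) = v_i$ for $i \in I_*$ and $P(v_i) = 0$ for $i \in I_0$.

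To finish, for each $k \in I_*$ write $(S+T)(v_k) = \lambda_k v_k + \sum_{j<k} t_{jk}\, v_j$ using that $[T]_{\B'}$ is strictly upper triangular, and apply $P$ to get $P(S+T)(v_k) = \lambda_k v_k + \sum_{j<k,\; j \in I_*} t_{jk}\, v_j \in V_*$. The matrix of $P(S+T)\vert_{V_*}$ in the ordered basis $\{v_i : i \in I_*\}$ is therefore upper triangular with diagonal entries $\lambda_k$, $k \in I_*$, all nonzero, and hence invertible.

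The main, and essentially only, obstacle is verifying that the change of basis from $\B$ to $\B'$ leaves $T$ strictly upper triangular; once this is in hand, the remaining computations are immediate.
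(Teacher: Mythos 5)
Your proof is correct and follows essentially the same route as the paper: reduce to the diagonal case via Lemma \ref{Lemma 2.}, observe that $T$ stays strictly upper triangular, and read off that the matrix of $P(S+T)\vert_{V_*}$ in the surviving basis vectors is upper triangular with nonzero diagonal. You merely make explicit the conjugation-preserves-strict-triangularity step that the paper leaves implicit.
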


\begin{proof}
 By Lemma \ref{Lemma 2.} we may assume that $A$ is already diagonal.
Strike out from $\B$ all vectors from $V_0$.
This produces a basis $\D$ of $V_*$.
The matrix $[PS\vert_{V_*}]_\D$ is diagonal with non-zero
entries and $[PT\vert_{V_*}]_\D$ is strictly upper triangular,
so $P(S+T)\vert_{V_*}$ is invertible.
\end{proof}

The next result is a well-known consequence of the Invariance Lemma
(see Lemma 9.13 of \cite{FH}) and Lie's Theorem (see \cite{FH} or \cite{Hu}),
and is included for the sake of completeness.

\medskip

\begin{lemma}\label{Lemma 4.}
 Suppose $k$ is algebraically closed and of characteristic 0. Let $\g$ be a Lie algebra over $k$ with solvable radical $\r$. Then  $[\g,\r]$ acts trivially on every irreducible $\g$-module.
\end{lemma}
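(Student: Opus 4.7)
The plan is to combine Lie's Theorem with the Invariance Lemma in the standard way. Let $\pi:\g\to\gl(V)$ be an irreducible representation. Since $\r$ is a solvable ideal and $k$ is algebraically closed of characteristic $0$, Lie's Theorem guarantees the existence of a common eigenvector for $\pi(\r)$ in $V$. That is, there exist $0\neq v_0\in V$ and a linear functional $\lambda:\r\to k$ such that $\pi(r)v_0=\lambda(r)v_0$ for every $r\in\r$.

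Next, I would form the weight space
\[
V_\lambda=\{v\in V:\pi(r)v=\lambda(r)v\text{ for all }r\in\r\}.
\]
The Invariance Lemma (Lemma 9.13 of \cite{FH}) applies here precisely because $\r$ is an ideal of $\g$, and it asserts that $V_\lambda$ is stable under the action of all of $\g$. So $V_\lambda$ is a nonzero $\g$-submodule of $V$, and by irreducibility we must have $V_\lambda=V$.

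Consequently $\pi(r)=\lambda(r)\,\mathrm{id}_V$ for every $r\in\r$. Then for any $x\in\g$ and $r\in\r$,
\[
\pi([x,r])=[\pi(x),\pi(r)]=[\pi(x),\lambda(r)\,\mathrm{id}_V]=0,
\]
so $\pi$ kills every element of the form $[x,r]$, and hence $\pi$ vanishes on the whole ideal $[\g,\r]$.

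There is essentially no serious obstacle; the only subtle point is verifying that the hypotheses of both Lie's Theorem and the Invariance Lemma are met. Lie's Theorem requires algebraic closure and characteristic zero to produce the weight $\lambda$, which is given. The Invariance Lemma requires that $\r$ be an ideal, which is part of the definition of the radical. The final commutator computation is immediate once one knows $\pi(\r)$ consists of scalars.
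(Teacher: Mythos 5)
Your argument is correct; it rests on the same two pillars as the paper's proof (Lie's Theorem and the Invariance Lemma) but deploys them slightly differently. The paper applies Lie's Theorem directly to the solvable ideal $[\g,\r]$, obtains a nonzero weight space $V_\lambda$ for a functional $\lambda:[\g,\r]\to k$, concludes $V=V_\lambda$ by the Invariance Lemma and irreducibility, and then must still show that $\lambda=0$; it does so by a trace argument: each element of $[\g,\r]$ acts as a sum of commutators $[\pi(x),\pi(y)]$ and so has trace zero, while acting as $\lambda(r)\,\mathrm{id}$ on $V=V_\lambda$ it has trace $\lambda(r)\dim V$, and characteristic zero forces $\lambda(r)=0$. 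You instead apply Lie's Theorem to all of $\r$, conclude that $\pi(\r)$ consists of scalar operators, and then $[\g,\r]$ is killed for free because scalars are central in $\gl(V)$. Your endgame avoids the explicit trace computation (though a trace argument still lurks inside the proof of the Invariance Lemma, so neither route truly escapes it), while the paper's version never needs to identify the action of all of $\r$, only of the smaller ideal $[\g,\r]$. Both proofs invoke algebraic closure and characteristic zero in the same places, and both are complete.
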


\begin{proof}
  Since $[\g,\r]$ is solvable Lie's theorem implies the existence of a linear functional
$\lambda: [\g,\r]\to k$ such that $V_\lambda=\{v\in V\,\vert\, r\cdot v=\lambda(r)v\text{ for all }r\in [\g,\r]\}$
is non-zero. But $[\g,\r]$ is an ideal of $\g$, so $V_\lambda$ is $\g$-invariant by the Invariance Lemma,
whence $V=V_\lambda$ by the irreducibility of $V$. Let $r\in [\g,\r]$.
Then the trace of $r$ acting on $V$ is 0, while the trace of $r$ acting on $V_\lambda$ is
$\lambda(r)\mathrm{dim}\,V_\lambda$. Hence $\lambda(r)=0$.
(cf. with the proof of Lemma C.19 of \cite{FH}.)
\end{proof}

\section{Determination of the semisimple and nilpotent parts}
The difficult part of Theorem 2 requires the three subsidiary results below, which
explicitly describe how the semisimple and nilpotent parts of a given $x\in[\g,\g]$
can be obtained. Here $k$ stands for an algebraically closed field of characteristic 0.

We consider a Levi decomposition
$\g=\s\ltimes \r$ and let $\n=[\g,\r]$. Note that $[\g,\g]=\s\ltimes \n$.
Moreover, we fix $x\in[\g,\g]$, so $x=a+r$ for unique $a\in\s$ and $r\in \n$,
and let $a=s+n$ be the abstract Jordan decomposition of $a$ in $\s$.
Furthermore, we let $\n_0$ be the 0-eigenspace of $ad_{\g} s$ acting on
$\n$ and denote by $\n_*$ the sum of the remaining eigenspaces.

\medskip

\begin{proposition}\label{Proposition 1.}
 Let $\pi:\g\to\gl(V)$ be a representation.
Then $\pi(n+b)$ is nilpotent for all $b\in \n$.
\end{proposition}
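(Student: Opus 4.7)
The plan is to combine three observations: that $\pi(b)$ vanishes on every composition factor of $V$, that $\pi(n)$ acts nilpotently on each such factor, and that a linear operator which is nilpotent on each piece of an invariant filtration is itself nilpotent.

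First I would reduce to working with a $\mathfrak{g}$-composition series $0 = V_0 \subset V_1 \subset \cdots \subset V_m = V$, which exists since $V$ is finite dimensional. Each quotient $W_i = V_i/V_{i-1}$ is an irreducible $\mathfrak{g}$-module. Since $b \in \mathfrak{n} = [\mathfrak{g},\mathfrak{r}]$, Lemma \ref{Lemma 4.} applies to each $W_i$ (which is allowed since $k$ is algebraically closed of characteristic zero) and shows that $b$ acts as zero on $W_i$. Consequently $\pi(b)$ sends each $V_i$ into $V_{i-1}$; in a basis adapted to the filtration, $\pi(b)$ is strictly block upper triangular.

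Next I would exploit that $n$ is the nilpotent part of $a$ in the abstract Jordan decomposition of the semisimple Lie algebra $\mathfrak{s}$. Since semisimple Lie algebras are known to possess abstract Jordan-Chevalley decompositions, restricting $\pi$ to $\mathfrak{s}$ yields a representation of $\mathfrak{s}$ in which $\pi(n)$ is nilpotent. Because each $V_i$ is $\mathfrak{g}$-stable and hence $\mathfrak{s}$-stable, $\pi(n)$ induces a nilpotent operator on every $W_i$.

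Combining the two observations, the operator $\pi(n+b) = \pi(n) + \pi(b)$ induces $\pi(n)|_{W_i}$ on each composition factor, which is nilpotent. Therefore in a basis adapted to the composition series, $\pi(n+b)$ is block upper triangular with nilpotent diagonal blocks, so $\pi(n+b)$ is itself nilpotent. I do not foresee a serious obstacle here: the only delicate point is having the ingredient from Lemma \ref{Lemma 4.} at hand so that $\pi(b)$ really kills every irreducible quotient; once that is available, the argument is a one-line block triangular observation.
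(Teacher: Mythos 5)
Your proof is correct and follows essentially the same route as the paper: the paper's one‑line argument ``arguing by induction we are reduced to the case when $V$ is irreducible'' is exactly your composition‑series reduction, after which Lemma \ref{Lemma 4.} kills $\pi(b)$ on each factor and the semisimplicity of $\s$ makes $\pi(n)$ nilpotent there. You have merely spelled out the block‑triangular bookkeeping that the paper leaves implicit.
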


\begin{proof}
 Arguing by induction we are reduced to the case when $V$ is irreducible.
In this case $\pi(b)=0$, by Lemma \ref{Lemma 4.}, and $\pi(n)$ is nilpotent. \end{proof}

\begin{proposition}\label{Proposition 2.}
 Let $\pi:\g\to\gl(V)$ be a representation.
Then $\pi(s+b)$ is diagonalizable for all $b\in \n_*$.
\end{proposition}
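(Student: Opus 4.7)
The plan is to exhibit a basis $\B$ of $V$ in which $[\pi(s)]_\B$ is diagonal and $[\pi(c)]_\B$ is strictly upper triangular for every $c\in\n$, and then apply Lemma~\ref{Lemma 1.} to the upper triangular matrix $[\pi(s+b)]_\B$. The key inputs are Lemma~\ref{Lemma 4.} together with the diagonalizability of $\pi(s)$, which comes from $s$ being a semisimple element of the semisimple Lie algebra $\s$.

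To construct $\B$, first fix a composition series $0=V_0\subset V_1\subset\cdots\subset V_m=V$ of $V$ as a $\g$-module. By Lemma~\ref{Lemma 4.} the ideal $\n=[\g,\r]$ acts trivially on each irreducible factor $V_i/V_{i-1}$, so $\pi(c)(V_i)\subseteq V_{i-1}$ for every $c\in\n$. Since $\pi(s)$ is diagonalizable and preserves the flag, it induces a diagonalizable action on each $V_i/V_{i-1}$; choosing bases that diagonalize these induced actions and lifting produces a flag-adapted basis $\B_0$ of $V$ in which $[\pi(s)]_{\B_0}$ is upper triangular (with the eigenvalues on the diagonal) and $[\pi(c)]_{\B_0}$ is strictly upper triangular for every $c\in\n$. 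Lemma~\ref{Lemma 2.} then produces an upper triangular change of basis that diagonalizes $[\pi(s)]_{\B_0}$; since upper triangular conjugation preserves strict upper triangularity, the resulting basis $\B$ satisfies both desired properties.

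It remains to verify the hypothesis of Lemma~\ref{Lemma 1.}: whenever $v_i,v_j\in\B$ with $i<j$ lie in the same eigenspace $V_\mu$ of $\pi(s)$, the entry $[\pi(b)]_{ij}$ must vanish. This is exactly where the assumption $b\in\n_*$ is used. Writing $b=\sum_{\lambda\neq 0}b_\lambda$ with $b_\lambda$ a $\lambda$-eigenvector of $\mathrm{ad}_\g\, s$ on $\n$, the relation $[\pi(s),\pi(b_\lambda)]=\lambda\pi(b_\lambda)$ gives $\pi(b_\lambda)V_\mu\subseteq V_{\mu+\lambda}$, so $\pi(b)V_\mu\subseteq\bigoplus_{\lambda\neq 0}V_{\mu+\lambda}$, which has zero intersection with $V_\mu$. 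Hence the coefficient of $v_i$ in $\pi(b)v_j$ is zero, and Lemma~\ref{Lemma 1.} yields the diagonalizability of $\pi(s+b)$. The only delicate point is the coordinated construction of $\B$---it has to be flag-adapted so that $\n$ acts by strictly upper triangular matrices, while simultaneously diagonalizing $\pi(s)$---and Lemma~\ref{Lemma 2.} is precisely what reconciles these two requirements.
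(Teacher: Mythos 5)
Your proof is correct and follows essentially the same strategy as the paper's: produce a basis in which $[\pi(s)]$ is diagonal and every element of $\n$ acts by a strictly upper triangular matrix, then use the weight relation $[s,b_\lambda]=\lambda b_\lambda$ with $\lambda\neq 0$ to verify the criterion of Lemma~\ref{Lemma 1.}. The only difference is cosmetic: you obtain the strict upper triangularity of $\pi(\n)$ directly from a composition series and Lemma~\ref{Lemma 4.}, whereas the paper gets it from Lie's theorem applied to the solvable subalgebra $ks\oplus\n$ together with Proposition~\ref{Proposition 1.}, whose own proof is precisely your composition-series argument.
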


\begin{proof}
Consider the solvable subalgebra $\t=k s\oplus \n$ of $[\g,\g]$.
By Lie's theorem and Lemma \ref{Lemma 2.} there is a basis $\D$ of $V$ such that $S=[\pi(s)]_\D$
is diagonal and $[\pi(t)]_\D$ is upper triangular for every $t\in \t$.
Suppose first $b\in\n(\alpha)$, the
$\alpha$-eigenspace of  $ad_{\g} s$ acting on $\n$, $\alpha\neq 0$.
By Proposition \ref{Proposition 1.} $\pi(b)$ is
nilpotent, so $B=[\pi(b)]_\D$ is strictly upper triangular.
We wish to show that $S+B$ is diagonalizable.
By Lemma \ref{Lemma 1.} it suffices to verify that $B_{ij}=0$ whenever $S_{ii}=S_{jj}$.
Since $[s,b]=\alpha b$, we have
$(S_{ii}-S_{jj})B_{ij}=\alpha B_{ij}$, so indeed $B_{ij}=0$ whenever $S_{ii}=S_{jj}$.
In general $b=b_1+\cdots+b_m$,
where $b_i\in\n(\alpha_i)$ and $\alpha_i\neq 0$, so this case follows from the first.
\end{proof}

\begin{proposition}\label{Proposition 3.}
 There exists $b\in\n_*$ such that $[a+r,b]=[s,r]$.
\end{proposition}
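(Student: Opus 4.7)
The plan is to use Lie's theorem on a suitable solvable subalgebra to simultaneously triangularize the relevant operators on $\n$, apply Lemma~3 to produce a candidate $b \in \n_*$ satisfying the projected equation, and then verify that this $b$ satisfies the full identity by showing that the $\n_0$-component of $[a+r,b]$ must vanish.

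First I would introduce the subspace $\t':=ks\oplus kn\oplus\n$ and verify that it is in fact a Lie subalgebra of $\g$: one uses $[s,n]=0$ and the fact that $\n$ is an ideal in $\g$ to check closure, and $[\t',\t']\subseteq\n$ plus the nilpotency of $\n$ to conclude that $\t'$ is solvable. Applying Lie's theorem to the adjoint representation of $\t'$ on $\n$, followed by Lemma~2 to diagonalize $\mathrm{ad}(s)|_\n$ without destroying upper-triangularity, I obtain a basis $\B$ of $\n$ in which $\mathrm{ad}(s)|_\n$ is diagonal and $\mathrm{ad}(t)|_\n$ is upper triangular for every $t\in\t'$. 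Proposition~1 applied to the adjoint representation (with the element $r\in\n$) ensures that $\mathrm{ad}(n+r)|_\n$ is nilpotent; being upper triangular, it must therefore be strictly upper triangular in $\B$.

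Next I would apply Lemma~3 with $V=\n$, $S=\mathrm{ad}(s)|_\n$, and $T=\mathrm{ad}(n+r)|_\n$, producing invertibility of $P\circ \mathrm{ad}(a+r)|_{\n_*}:\n_*\to\n_*$, where $P$ is the projection onto $\n_*$ along $\n_0$. Since $[s,r]=[s,r_*]$ and $\mathrm{ad}(s)$ preserves $\n_*$, we have $[s,r]\in\n_*$, so there is a \emph{unique} $b\in\n_*$ with $P[a+r,b]=[s,r]$.

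The heart of the argument, and what I expect to be the main obstacle, is upgrading this to $[a+r,b]=[s,r]$, i.e.\ showing that $P_0[a+r,b]=0$. A direct expansion simplifies this: since $\mathrm{ad}(s)(b)\in\n_*$ and since $n+r_0$ commutes with $s$ and so preserves the weight decomposition $\n=\n_0\oplus\n_*$, we have $P_0[a+r,b]=P_0[r_*,b]$. Written in the basis $\B$ with $\mathrm{ad}(a+r)|_\n$ appearing as $D+U$ (with $D$ diagonal and $U$ strictly upper triangular), the conditions $[a+r,b]=[s,r]$ split as: for each $k$ with $\mu_k\neq 0$, the back-substitution formula $\beta_k=r_k-\mu_k^{-1}\sum_{i>k}\beta_i U_{ki}$ determining $\beta_k$; and, for each $k$ with $\mu_k=0$, the \emph{constraint} $\sum_{i>k}\beta_iU_{ki}=0$. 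My plan would be to verify these constraints directly by exploiting $[r,r]=0$ (which gives $\sum_i r_iU^{(r)}_{ki}=0$) together with the facts that $\mathrm{ad}(n)$ commutes with $\mathrm{ad}(s)$ and that the $\beta_i$ are polynomial in the $r_j$ via the recursion above; one expects a careful bookkeeping argument — possibly organized as induction along the flag from Lie's theorem or along the lower central series of $\n$ — to force the cancellation required for the $\n_0$-component of $[r_*,b]$ to vanish.
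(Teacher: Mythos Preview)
Your setup and the production of $b\in\n_*$ satisfying $P[a+r,b]=[s,r]$ match the paper's argument (the paper applies Lemma~3 on $V=\t$ rather than on $V=\n$, but this is immaterial since $V_*=\n_*$ either way). The real content of Proposition~3 is precisely what you flag as the main obstacle: showing that the $\n_0$-component $c:=P_0[a+r,b]$ vanishes. Here your proposal does not supply an argument. The hoped-for ``bookkeeping'' via $[r,r]=0$ and back-substitution is not carried out, and it is not clear such a direct computation closes: the identity $[r,r]=0$ is vacuous (it just says $\mathrm{ad}(r)(r)=0$), while the constraints $\sum_{i>k}\beta_i U_{ki}=0$ at indices with $\mu_k=0$ involve $b$, which depends on $r$ through the entire recursion, so there is no evident cancellation mechanism coming from the Jacobi identity alone.

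The paper handles $c=0$ by an idea you are missing entirely: a bootstrap through Ado's theorem and Proposition~2. One passes to a faithful representation $\pi:\g\to\gl(n,k)$ and rewrites the relation as
\[
[\pi(s)+\pi(b),\,-\pi(n)-\pi(r)+\pi(b)]=\pi(c).
\]
Since $b\in\n_*$, Proposition~2 (already established) says $\pi(s+b)$ is diagonalizable, hence so is $S:=\mathrm{ad}_{\pi(\t)}(\pi(s)+\pi(b))$. Now apply Lemma~3 once more with this $S$ and $T:=\mathrm{ad}_{\pi(\t)}(-\pi(b))$, so that $S+T=\mathrm{ad}_{\pi(\t)}(\pi(s))$. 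On one hand $\pi(c)\in\ker(S+T)$ because $c\in\n_0$; on the other hand the displayed bracket exhibits $\pi(c)$ in the image of $S$, which for a diagonalizable operator equals $V_*$. Lemma~3 forces $\ker(S+T)\cap V_*=0$, so $\pi(c)=0$ and hence $c=0$. In short, the crucial step is not a combinatorial verification but a second application of Lemma~3 leveraging the diagonalizability furnished by Proposition~2.
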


\begin{proof}
 Consider the solvable subalgebra $\t=k s \oplus k n\oplus \n$ of $[\g,\g]$.
Argue as in the proof of Proposition \ref{Proposition 2.} to find a basis $\B$ of $\t$ such that $[ad_\t(s)]_\B$
is diagonal and $[ad_\t(n)]_\B, [ad_\t(r)]_\B $ are strictly upper triangular.
Apply Lemma \ref{Lemma 3.} with $V=\t$, $S=ad_\t(s)$ and $T=ad_\t(n)+ad_\t(r)$. Clearly $V_*=\n_*$ and
$V_0=k s\oplus k n\oplus \n_0$. Since $[s,r]\in \n_*$, Lemma \ref{Lemma 3.} ensures the existence of $b\in \n_*$
such that $[a+r,b]=[s,r]+c$, where $c=[a+r,b]-[s,r]\in V_0\cap \n=\n_0$. We next show that $c=0$.
Indeed, by Ado's theorem $\g$ has faithful representation $\pi:\g\to\gl(n,k)$. We have
$[\pi(s)+\pi(b),-\pi(n)-\pi(r)+\pi(b)]=\pi(c)$ (*). By Lie's theorem there is a basis
$\D$ of $\pi(\t)$ such that
$[ad_{\pi(\t)}(\pi(t))]_\D$ is upper triangular for every $t\in\t$.
Apply Lemma \ref{Lemma 3.} with $V=\pi(\t)$, $S=ad_{\pi(\t)}(\pi(s)+\pi(b))$ and $T=ad_{\pi(\t)}(-\pi(b))$.
Here
$S$ is diagonalizable by Proposition \ref{Proposition 2.}. By Lemma \ref{Lemma 3.}
$\mathrm{ker}(S+T)\cap V_*=\{0\}$. But $\pi(c)\in \mathrm{ker}(S+T)$ since $c\in \n_0$, and
$\pi(c)\in V_*$ applying the fact that $S$ is diagonalizable to (*). Hence $\pi(c)=0$, so $c=0$.
\end{proof}

\section{Proof of Theorem 2}

Throughout the proof $k$ is a field of characteristic 0.

\begin{proof}[Necessity.] This is clear since any linear
map from $\g$ to $\gl(V)$ such that $\dim\pi(\g)=1$ and $\pi([\g,\g])=0$
is a representation.

\smallskip

\noindent{\it Sufficiency.} Suppose first that $k$ is
algebraically closed. Let $x\in [\g,\g]$. Fix a Levi decomposition
$\g=\s\ltimes \r$ and let $\n=[\g,\r]$. Then $x=a+r$ for unique
$a\in\s$ and $r\in \n$. Let $a=s+n$ be the abstract
Jordan-Chevalley decomposition of $a$ in $\s$ and set
$\n_*=[s,\n]$. By Proposition \ref{Proposition 3.} there exists
$b\in\n_*$ such that $[a+r,b]=[s,r]$. This translates into
$[s+b,n+r-b]=0$. Clearly $x=(s+b)+(n+r-b)$. Moreover, if
$\pi:\g\to\gl(V)$ is a representation then Propositions
\ref{Proposition 1.} and \ref{Proposition 2.} ensure that
$\pi(s+b)$ is diagonalizable and $\pi(n+r-b)$ is nilpotent, as
required.

Suppose next that $k$ is arbitrary. Let $x\in[\g,\g]$.
Let $k_0$ be an algebraic closure of $k$.
Let $\g_0$ be the extension of $\g$ to $k_0$. By above $x$ has an abstract Jordan
decomposition $x=s+n$ in $[\g_0,\g_0]$. Let $\pi:\g\to\gl(n,k)$ be a representation of $\g$.
Let $\pi_0:\g_0\to \gl(n,k_0)$ be its extension to $\g_0$.
Let $\pi(x)=S+N$ be the Jordan decomposition of $\pi(x)$ in $\gl(n,k)$.
The minimal polynomial, say  $p$, of $S$ is a product of distinct monic
irreducible polynomials over $k$.
Since $k$ has characteristic 0, we see that $p$ has distinct roots in $k_0$,
whence $S$ is diagonalizable
over $k_0$. It follows that $\pi(x)=S+N$ is the Jordan decomposition of $\pi(x)$ in $\gl(n,k_0)$.
On the other hand, by above $S=\pi_0(s)\in \pi_0(\g_0)$ and $N=\pi_0(n)\in\pi_0(\g_0)$.
Hence $S,N\in \pi_0(\g_0)\cap \gl(n,k)$. We claim
that this intersection is $\pi(\g)$. Indeed, let $\B$
be a basis of $k_0$ over $k$ containing 1. Every element $T$ of $\pi_0(\g_0)$
is a sum of matrices of the form $\alpha\pi(y)$,
where $\alpha\in \B$ and $y\in \g$. By the linear independence of $\B$,
if $T$ is also in $\gl(n,k)$ then all summands
are 0 but $1\cdot \pi(y)$, as required. This proves that $\pi_0(s),\pi_0(n)\in \pi(\g)$.
This holds in particular
when $\pi$ is faithful. Using $\B$ once more we see that $\pi_0$
is faithful. It follows that $s,n\in\g\cap[\g_0,\g_0]=[\g,\g]$ and, a fortiori, the Jordan decomposition of
$\pi(x)$ in $\gl(n,k)$, for $\pi$ arbitrary,
is $\pi(x)=\pi(s)+\pi(n)$.

The uniqueness of $s$ and $n$ now follows from the uniqueness of the Jordan
decomposition of $\pi(x)$ in $\gl(n,k)$
when $\pi$ is faithful. This completes the proof of the theorem.
\end{proof}


\end{document}